\documentclass[a4paper,12pt]{article}
\usepackage{amsfonts}
\usepackage{amssymb}
\usepackage{latexsym}
\usepackage{amsmath}
\usepackage{amsthm}
\usepackage{graphicx}

\setlength{\bigskipamount}{5ex plus1.5ex minus 2ex}
\setlength{\textheight}{23cm} \setlength{\textwidth}{16cm}
\setlength{\hoffset}{-1.3cm} \setlength{\voffset}{-1.8cm}

\newtheorem{theorem}{Theorem}

\newtheorem{proposition}[theorem]{Proposition}

\newtheorem{remark}{Remark}

\newcommand{\NN}{\mathbb{N}}
\newcommand{\RR}{\mathbb{R}}
\newcommand{\ZZ}{\mathbb{Z}}

\newcommand{\bse}{\boldsymbol{e}}
\newcommand{\bsx}{\boldsymbol{x}}
\newcommand{\bsg}{\boldsymbol{g}}
\newcommand{\bsh}{\boldsymbol{h}}
\newcommand{\bsb}{\boldsymbol{b}}

\newcommand{\bszero}{\boldsymbol{0}}

\newcommand{\cP}{\mathcal{P}}

\title{A note on isotropic discrepancy and spectral test of lattice point sets}
\author{Friedrich Pillichshammer and Mathias Sonnleitner\thanks{The authors are supported by the Austrian Science Fund (FWF): Projects F5509-N26 (Pillichshammer) and F5513-N26 (Sonnleitner), which are part of the Special Research Program ``Quasi-Monte Carlo Methods: Theory and Applications''.}}

\date{}

\begin{document}

\maketitle

\begin{abstract}

We show that the isotropic discrepancy of a lattice point set can be bounded from below and from above in terms of the spectral test of the corresponding integration lattice.  From this result we deduce that the isotropic discrepancy of any $N$-element lattice point set in $[0,1)^d$ is at least of order $N^{-1/d}$. This order of magnitude is best possible for lattice point sets in dimension $d$.
\end{abstract}

\centerline{\begin{minipage}[hc]{130mm}{
{\em Keywords:} isotropic discrepancy, integration lattice, spectral test\\
{\em MSC 2010:} 11K38}
\end{minipage}}

\section{Introduction}

For a finite set of points $\cP=\{\bsx_0,\bsx_1,\ldots,\bsx_{N-1}\}$ in the $d$-dimensional unit-cube $[0,1)^d$ the {\it isotropic discrepancy} is defined as $$J_N(\cP):=\sup_{C}\left|\frac{\#\{n \ : \ 0 \le n < N,\ \bsx_n \in C\}}{N}-{\rm volume}(C)\right|,$$ where the supremum is extended over all convex subsets $C$ of $[0,1)^d$. The isotropic discrepancy is a quantitative measure for the irregularity of distribution of $\cP$, see, e.g., \cite{DT97,kuinie}.

In \cite[Theorem~1]{sch75} (see also \cite[Theorem~13A]{sch77}) Schmidt proved the following general lower bound for arbitrary point sets: {\it For every dimension $d$ there exists a positive constant $c_d$ such that for all $N$-element point sets $\cP$ in $[0,1)^d$ we have $$J_N(\cP) \ge \frac{c_d}{N^{2/(d+1)}}.$$} This result is essentially (up to $\log$-factors) best possible as shown by Beck~\cite{beck} for $d=2$ and Stute~\cite{stute} for $d\ge 3$ using probabilistic methods.

Upper bounds on the isotropic discrepancy of special point sets are given in \cite{lar86,lar88}. For example, $J_N(\cP) = O(N^{-1/d})$ if $\cP$ is the Hammersley net or the initial segment of the Halton sequence in dimension $d$. Aistleitner, Brauchart and Dick \cite{ABD} used plane point sets with low isotropic discrepancy to generate point sets on the sphere $\mathbb{S}^2$ with small spherical cap discrepancy. They analyzed $(0,m,2)$-nets in base $b$ in the sense of Niederreiter \cite{niesiam} and Fibonacci lattices and showed for these classes of point sets $(d=2)$ an isotropic discrepancy of order of magnitude $O(N^{-1/2})$ whereas the optimal rate would be $N^{-2/3}$. Furthermore, they stated the following question: {\it Whether $(0,m,2)$-nets and/or Fibonacci lattices achieve the optimal rate of convergence
for the isotropic discrepancy is an open question} (see \cite[p.~1001]{ABD}).  

In this note we present a very short and elementary argument that shows that the answer to this question is negative for the Fibonacci lattice. Even more generally, we show that the same applies to any lattice point set in arbitrary dimension $d$ arising from the intersection of an integration lattice with the unit cube $[0,1)^d$. We acknowledge that for the special case of rank-1 lattice point sets (in particular for the Fibonacci lattice) this can already be deduced from a result by Larcher~\cite{lar89} on initial segments of Kronecker sequences, which, however, is proved with different methods and shows that there are large empty rotated boxes. 

Our proof is based on the observation that the isotropic discrepancy of a lattice point set is up to constants depending on the dimension $d$ exactly the spectral test of the corresponding integration lattice.
In the following section we present basic information on integration lattices and lattice point sets. Our results together with their proofs can be found in Section~\ref{sec3}.

\section{Integration lattices}

A $d$-dimensional {\it lattice} is obtained by taking a basis $\bsb_1,\ldots,\bsb_d$ of the vector space $\RR^d$ and forming the set
$$
L=\left\{\sum_{i=1}^d k_i \bsb_i : k_i \in \ZZ \;  \mbox{ for } 1 \le i \le d \right\}
$$
of all $\ZZ$-linear combinations of $\bsb_1,\ldots,\bsb_d$ with integer coefficients. A $d$-dimensional {\it integration lattice} is a lattice that contains $\ZZ^d$ as a subset. For more information we refer to the books \cite{niesiam,SJ}.

If $L$ is a $d$-dimensional integration lattice, then the intersection $L \cap [0,1)^d$ is a finite set since $L$ is discrete, and this finite set of points in $[0,1)^d$ forms a so called {\it lattice point set} denoted by $\cP(L)$. Lattice point sets are commonly used as nodes for numerical integration rules (so called lattice rules), see, e.g., \cite{DKS,niesiam,SJ}. An important sub-class of lattice point sets are so-called {\it rank-1 lattice point sets} which consist of the elements $\{(n/N) \bsg\}$ for $n=0,1,\ldots,N-1$, where $N \in \NN$ and $\bsg$ is a suitable lattice point in $\ZZ^d$. Here, $\{\cdot\}$ denotes the fractional-part-function applied to each component of a vector in $\RR^d$. For example, the aforementioned Fibonacci lattice ($d=2$) is obtained when choosing $N=F_m$, the $m^{{\rm th}}$ Fibonacci number, and letting $\bsg=(1,F_{m-1})$.

An important concept for the analysis of lattice point sets is that of the dual lattice: The {\it dual lattice} $L^{\bot}$ of a $d$-dimensional integration lattice $L$ is defined by
$$
L^{\bot}=\left\{\bsh \in \RR^d\ :\ \bsh \cdot \bsg \in \ZZ \; \mbox{ for all } \bsg \in L\right\},
$$
where $\cdot$ denotes the standard inner product on $\RR^d$.

A well known numerical quantity to assess the coarseness of (integration) lattices $L$ in $\RR^d$ is the {\it spectral test} which is defined as $$\sigma(L):=\frac{1}{\min\{\|\bsh\|_2 \, : \, \bsh \in L^{\bot}\setminus\{\bszero\}\}},$$ where $\|\cdot\|_2$ denotes the $\ell_2$-norm in $\RR^d$. It has the following geometric interpretation: The spectral test is the maximal distance between two adjacent hyperplanes, taken over all families of parallel hyperplanes that cover the lattice $L$, see \cite{He98} (and also \cite[p.~29]{SJ}).

\section{The results and the proofs}\label{sec3}

We will show the following result:

\begin{theorem}\label{thm}
	Let $\cP(L)$ be an $N$-element lattice point set in $[0,1)^d$. Then we have $$J_N( \cP(L)) \ge \min\Big(\frac{1}{2\sqrt{d}+1},\frac{c_d}{N^{1/d}}\Big),$$ where $$c_d:=  \frac{\sqrt{\pi}}{2\sqrt{d}+1} \left(\Gamma\left(\frac{d}{2}+1\right)\right)^{-1/d},$$ and where $\Gamma$ denotes the Gamma function.  If $\sigma(L)\leq 1/2$, then $c_d$ may be replaced by $c \left(\Gamma\left(\frac{d}{2}+1\right)\right)^{-1/d}$ for some $c>0$ that is independent of $d$.
\end{theorem}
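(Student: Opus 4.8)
The plan is to combine two elementary estimates: first, a lower bound for $J_N(\cP(L))$ in terms of $\sigma(L)$ obtained from an ``empty slab'' attached to a shortest nonzero dual vector; and second, a lower bound for $\sigma(L)$ in terms of $N$ obtained from Minkowski's first theorem. Throughout write $\cP(L)=L\cap[0,1)^d$ and recall that, since $L\supseteq\ZZ^d$, every coset of $\ZZ^d$ in $L$ has exactly one representative in the fundamental domain $[0,1)^d$, so $N=[L:\ZZ^d]=1/\det(L)=\det(L^{\bot})$; in particular $L^{\bot}\subseteq\ZZ^d$. Fix a nonzero $\bsh\in L^{\bot}$ of minimal Euclidean length, so that $\bsh\in\ZZ^d\setminus\{\bszero\}$ and $\|\bsh\|_2=1/\sigma(L)$.

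For the bound on $J_N$, consider for each $m\in\ZZ$ the set $S_m=\{\bsx\in[0,1)^d:\ m-1<\bsh\cdot\bsx<m\}$. Each $S_m$ is a convex subset of $[0,1)^d$ containing no point of $\cP(L)$, since $\bsh\cdot\bsx\in\ZZ$ for all $\bsx\in L$ while $(m-1,m)$ contains no integer; hence $J_N(\cP(L))\ge\sup_{m\in\ZZ}{\rm volume}(S_m)$. For the first assertion of the theorem I would bound this supremum crudely: as $\bsx$ runs over $[0,1]^d$ the value $\bsh\cdot\bsx$ runs over an interval with integer endpoints of length $\|\bsh\|_1$, so exactly $\|\bsh\|_1$ of the $S_m$ have positive volume and they tile $[0,1)^d$ up to a null set; therefore $\sup_m{\rm volume}(S_m)\ge1/\|\bsh\|_1\ge1/(\sqrt d\,\|\bsh\|_2)=\sigma(L)/\sqrt d$ by Cauchy--Schwarz. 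For the improved assertion I would instead use concentration: if $\bsx$ is uniform on $[0,1)^d$ then $\bsh\cdot\bsx$ has variance $\|\bsh\|_2^2/12$, so by Chebyshev at least a proportion $3/4$ of its mass lies in an interval of length $4\|\bsh\|_2/\sqrt{12}$, which meets at most $4\|\bsh\|_2/\sqrt{12}+2$ of the $S_m$; hence $\sup_m{\rm volume}(S_m)\ge c'\,\sigma(L)$ for an absolute constant $c'>0$ as soon as $\|\bsh\|_2\ge2$, that is $\sigma(L)\le1/2$, this hypothesis being used precisely to absorb the additive term $2$ into $\|\bsh\|_2$.

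For the bound on $\sigma(L)$, apply Minkowski's first theorem to the lattice $L^{\bot}$, which has covolume $N$, and to the closed Euclidean ball of radius $2(N/\omega_d)^{1/d}$, where $\omega_d=\pi^{d/2}/\Gamma\!\left(\frac d2+1\right)$ is the volume of the unit ball; this ball has volume $2^dN$ and is compact, so it contains a nonzero point of $L^{\bot}$. Consequently $1/\sigma(L)=\min_{\bszero\ne\bsh\in L^{\bot}}\|\bsh\|_2\le2(N/\omega_d)^{1/d}$, i.e. $\sigma(L)\ge\tfrac12(\omega_d/N)^{1/d}=\tfrac{\sqrt\pi}{2}\,\Gamma\!\left(\frac d2+1\right)^{-1/d}N^{-1/d}$. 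Combining this with the crude slab bound yields $J_N(\cP(L))\ge\tfrac12\sqrt{\pi/d}\,\Gamma\!\left(\frac d2+1\right)^{-1/d}N^{-1/d}$, which is the stated bound with $c_d=\tfrac12\sqrt{\pi/d}\,\Gamma\!\left(\frac d2+1\right)^{-1/d}$; combining it with the concentration bound yields $c_d=c\,\Gamma\!\left(\frac d2+1\right)^{-1/d}$ for an absolute $c>0$ whenever $\sigma(L)\le1/2$.

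The step that needs care is the improved bound: the crude count genuinely loses a factor $\sqrt d$ (for $\bsh=\bsone$ one has $\|\bsh\|_1=d=\sqrt d\,\|\bsh\|_2$), so it is essential to replace ``how many slabs there are'' by ``how concentrated $\bsh\cdot\bsx$ is'', the latter living at scale $\|\bsh\|_2$ rather than $\|\bsh\|_1$; the restriction $\sigma(L)\le1/2$ then enters only to control the lower-order term in the count of slabs met by the concentration interval. Everything else---identifying $N$ with $\det(L^{\bot})$, the convexity of the $S_m$, and the Cauchy--Schwarz, Chebyshev and Minkowski estimates---is routine.
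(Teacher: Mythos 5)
Your proposal is correct, and while its overall architecture coincides with the paper's (first bound $J_N(\cP(L))$ from below by a constant times $\sigma(L)$, then bound $\sigma(L)\ge\frac{\sqrt{\pi}}{2}(\Gamma(\frac d2+1))^{-1/d}N^{-1/d}$ via Minkowski's theorem applied to $L^{\bot}$ with $\det(L^{\bot})=N$ --- this second step is essentially identical to the paper's Proposition), the way you relate $J_N$ to $\sigma(L)$ is genuinely different in both regimes. For the general bound $J_N\ge\sigma(L)/\sqrt d$ the paper uses an \emph{overfull} convex set: a pigeonhole argument over the at most $\lfloor\sqrt d/\sigma(L)\rfloor$ covering hyperplanes meeting the cube produces a zero-volume hyperplane slice containing at least $N\sigma(L)/\sqrt d$ points; you instead use \emph{empty} slabs $S_m=\{\bsx: m-1<\bsh\cdot\bsx<m\}$ and count them via $\|\bsh\|_1\le\sqrt d\,\|\bsh\|_2$, which crucially exploits $L^{\bot}\subseteq\ZZ^d$, i.e.\ the integrality of the shortest dual vector. (A small trade-off: the paper's argument works for arbitrary lattices, as noted in its Remark, whereas yours is tied to integration lattices --- which is all the theorem needs.) For the improved bound under $\sigma(L)\le 1/2$ the divergence is more substantial: the paper builds an empty slab sandwiching the center of the cube and invokes the K\"onig--Rudelson theorem on volumes of non-central hyperplane sections of the cube to get a dimension-free lower bound on the slab's volume, while you apply Chebyshev's inequality to $\bsh\cdot\bsx$ (variance $\|\bsh\|_2^2/12$) to show that an interval of length $O(\|\bsh\|_2)$ captures mass $3/4$, which is then shared among at most $O(\|\bsh\|_2)$ slabs once $\|\bsh\|_2\ge 2$ absorbs the additive $+2$; this yields an explicit absolute constant (roughly $3/(4(1+2/\sqrt3))$ times $\sqrt\pi/2$ after combining with Minkowski) by completely elementary means, avoiding the recent and nontrivial sections result. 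The only point to state carefully is the Minkowski step with equality of volumes: you correctly appeal to the compact version of Minkowski's theorem for the closed ball of volume exactly $2^dN$ (or one can take a limit in the radius), which is the same minor issue implicit in the paper's own proof.
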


We remark that it is easily seen that $$c_d \sim \sqrt{\frac{\pi\, {\rm e}}{2}} \, \frac{1}{d} \ \ \ \mbox{ as $d \rightarrow \infty$.}$$ 

\begin{remark}
	In the original version of this preprint (and its published version \cite{PS20}) there appeared a slight inaccuracy in the presented lower bound in Theorem~\ref{thm} and also in the following Theorem~\ref{pr1}. Also the upper bound in Theorem~\ref{pr1} is actually larger by roughly a factor of $2^d$, which is corrected in the follow-up paper \cite{SP21}.
\end{remark}

In order to prove Theorem~\ref{thm} we first show that the isotropic discrepancy of lattice point sets can be lower and upper bounded by means of the spectral test of the corresponding integration lattices. This result is interesting in its own right. Then we present a lower bound on the spectral test of integration lattices.

\begin{theorem}\label{pr1}
	Let $\cP(L)$ be an $N$-element lattice point set in $[0,1)^d$. Then we have $$\frac{\sigma(L)}{\sqrt{d}+\sigma(L)}\leq J_N(\mathcal{P}(L))\leq d^2 \, 2^{d} \sigma(L).$$ If $\sigma(L)\leq 1/2$, then the lower bound can be replaced by $c\, \sigma(L)$, where $c>0$ is an absolute constant.
\end{theorem}

\begin{proof}
We show the lower bound on the isotropic discrepancy by finding a convex set of zero volume on which at least $N\sigma(L)/(\sqrt{d}+\sigma(L))$ points lie. Intersecting the lattice point set with a suitable hyperplane gives a convex set as desired.

Finding such an hyperplane is basically an application of the pigeonhole principle. Namely, by the geometric interpretation of the spectral test, there exists a family of parallel hyperplanes covering the associated lattice $L$ and any two hyperplanes in the family are separated by $\sigma(L)$. Since the unit cube $[0,1)^d$ has diameter $\sqrt{d}$ and $\bszero\in\cP(L)$, it can be intersected by no more than $\sqrt{d}/\sigma(L)+1$ such hyperplanes. 

If all hyperplanes intersecting $[0,1)^d$ contained strictly less than $N\sigma(L)/(\sqrt{d}+\sigma(L))$ points of the lattice point set, this would be a contradiction since any lattice point of $\cP(L)$ must lie on one of these hyperplanes. Therefore, we may find at least $N\sigma(L)/(\sqrt{d}+\sigma(L))$ lattice points lying on some hyperplane. This implies the lower bound.\\

Suppose now that additionally $\sigma(L)\leq 1/2$. We show the lower bound on the isotropic discrepancy by finding an empty convex set $C\subset [0,1)^d$ of volume at least $c\sigma(L)$ which will be constructed by intersecting the slab between two suitable hyperplanes with the unit cube.

To bound its volume from below, we make use of recent progress due to K\"{o}nig and Rudelson \cite{KR19}. Theorem 1.1. in their paper implies the existence of a $c>0$ such that for any $d\in\NN$ the $(d-1)$-dimensional volume of the intersection of the cube $[0,1)^d$ with any hyperplane having distance at most $1/2$ from its center is bounded from below by $c$. 

To apply this theorem, fix a covering of $\cP(L)$ by a family $\mathcal{H}$ consisting of parallel hyperplanes which are separated by at least $\sigma(L)$. We find a pair of hyperplanes surrounding the center of the cube by a pigeonhole argument. 

To this end, consider the one-dimensional space orthogonal to all hyperplanes in $\mathcal{H}$ which is spanned by some $\bsh\in\RR^d$. The rays emanating from the center of the cube into the directions $\pm \bsh$ hit an hyperplane of $\mathcal{H}$ at distance at most $\sigma(L)$ from the center of the cube. In this way, we get a pair of adjacent hyperplanes $H_1,H_2\in\mathcal{H}$ sandwiching the center of the cube and having distance $\sigma(L)$. Denote the collection of all hyperplanes parallel to the family $\mathcal{H}$ which lie between $H_1$ and $H_2$ by $\widetilde{\mathcal{H}}$. Finally, define the open convex set $$C:=\text{int}(\text{conv}(H_1\cup H_2)\cap [0,1)^d),$$ which does not contain any point from $\cP(L)$. The volume of $C$ can be bounded from below by $$\text{volume}(C)\geq \sigma(L) \inf_{H\in \widetilde{\mathcal{H}}} \text{volume}_{d-1}(H\cap [0,1)^d).$$ 

Since all hyperplanes in $\widetilde{\mathcal{H}}$ have distance at most $\sigma(L)\leq 1/2$ from the center, the lower bound of K\"{o}nig and Rudelson yields that the infimum is bounded from below by some $c>0$. Thus, we have found an empty convex set $C$ with $\text{volume}(C)\geq c\,\sigma(L)$ and the first inequality is proven. \\

Now we prove the upper bound. By means of the LLL-algorithm, see, e.g., \cite[Chapter 17]{galbraith}, we find a reduced basis $(\bsb_1,...,\bsb_d)$ of the lattice $L$ containing short near-orthogonal vectors. The definition of a reduced basis requires the Gram-Schmidt orthogonalization $(\bsb^{\ast}_1,...,\bsb^{\ast}_d)$ which is obtained from the lattice basis by setting $\bsb^{\ast}_1=\bsb_1$ and
\[
\bsb^{\ast}_i=\bsb_i-\sum_{j=1}^{i-1} \mu_{i,j} \bsb^{\ast}_j \quad \text{for } 2\leq i \leq d,
\]
where 
\[
\mu_{i,j}=\frac{\bsb_i\cdot\bsb^{\ast}_j}{\Vert \bsb^{\ast}_j\Vert_2^2}\quad \text{for } 2\leq i \leq d \text{ and } 1\leq j \leq i-1.
\]

Then, from the properties of a reduced basis it can easily be deduced, see, e.g., \cite[Lemma 17.2.8]{galbraith}, that
\begin{enumerate}
\item[(a)] $\Vert \bsb^{\ast}_j\Vert_2^2\leq 2^{i-j}\Vert \bsb^{\ast}_{i}\Vert_2^2$ for $1\leq j \leq i \leq d$ and
\item[(b)] $\Vert \bsb_i\Vert_2^2\leq 2^{d-1}\Vert \bsb^{\ast}_i\Vert_2^2$ for $1\leq i\leq d$.
\end{enumerate}

Together these estimates imply
\[
\Vert \bsb^{\ast}_d\Vert_2\geq 2^{-(d-1)/2}\max_{1\leq i\leq d}\Vert \bsb^{\ast}_i\Vert_2\geq 2^{-d+1}\max_{1\leq i \leq d}\Vert \bsb_i\Vert_2.
\]
Now consider the fundamental parallelotope associated to the basis $(\bsb_1,...,\bsb_d)$, i.e.,
\begin{equation}\label{fundpara}
P:=\left\{\sum_{i=1}^d \lambda_i \bsb_i \ : \ 0\leq \lambda_i <1 \text{ for } 1\leq i\leq d\right\},
\end{equation}
which is often called a {\it unit cell} of the lattice $L$. This unit cell induces a partition of $\RR^d$ into disjoint cells $\bsx+P$ where $\bsx\in L$. Each of these translated unit cells contains only the lattice point $\bsx$ and has diameter
\[
\text{diam}(P)\leq \sum_{i=1}^d \Vert \bsb_i\Vert_2 \leq d \max_{1\leq i \leq d} \Vert \bsb_i\Vert_2 \leq d \, 2^{d-1} \, \Vert \bsb_d^{\ast}\Vert_2.
\]
By construction we have 
\[
\Vert \bsb_d^{\ast}\Vert_2 = \left\Vert \bsb_d - \sum_{i=1}^{d-1} \mu_{d,j}\bsb^{\ast}_j\right\Vert_2.
\]
That is, the length of the last vector in the Gram-Schmidt orthogonalization is the length of the projection of the vector $\bsb_d$ onto the orthogonal complement of the subspace span$\{\bsb^{\ast}_1,...,\bsb^{\ast}_{d-1}\}=\text{span}\{\bsb_1,...,\bsb_{d-1}\}$ spanned by the other basis vectors. But this is exactly the distance between two adjacent hyperplanes of the family of parallel hyperplanes
\[
k\bsb_d+\text{span}\{\bsb_1,...,\bsb_{d-1}\}\ \ \ \text{for } k\in\mathbb{Z},
\]
which covers the entire lattice $L$. Therefore, since this distance cannot be larger than the spectral test, we have
\[
d\, 2^{d-1}\, \sigma(L)\geq d\, 2^{d-1} \, \Vert \bsb_d^{\ast}\Vert_2 \geq \text{diam}(P).
\]

In the spirit of Aistleitner et al. (see \cite[Proof of Lemma~17]{ABD}) we now bound the discrepancy 
$$\Delta_{\cP(L)}(C):=\frac{\#\{n \ : \ 0 \le n < N,\ \bsx_n \in C\}}{N}-{\rm volume}(C)$$
of an arbitrary convex set $C\subseteq [0,1)^d$ in terms of the diameter of the unit cell $P$ given in \eqref{fundpara}. We describe the idea behind this. 

Consider the collection of translated unit cells $\bsx+P$ with $\bsx \in L$ that are fully contained in $C$ and denote their union by $W^{\circ}$. Likewise denote the union of translated unit cells having non-empty intersection with $C$ by $\overline{W}$. Clearly, we have $$W^{\circ}\subseteq C \subseteq \overline{W}.$$ 

Since $L$ is an integration lattice, the volume of $P$ is exactly $1/N$ (see, e.g., \cite{niesiam, SK}). Furthermore, every translated unit cell $\bsx+P$ with $\bsx \in L$ contains only the lattice point $\bsx$. This implies (see \cite{ABD}) that the discrepancy of $C$ is only influenced by cells intersecting the boundary of $C$ and satisfies
\[
|\Delta_{\cP(L)}(C)|\leq \max\{\text{volume}(\overline{W}\backslash C),\text{volume}(C\backslash W^{\circ})\}.
\]
The volume of these two differences can be bounded by the diameter of a unit cell times the surface area of the unit cube $[0,1)^d$. We already derived a bound for the first quantity and the second equals $2d$ which is the number of $(d-1)$-dimensional faces of $[0,1)^d$. 

Summarizing, we arrive at
\[
|\Delta_{\cP(L)}(C)|\leq d^2\, 2^{d} \sigma(L).
\]
Since the convex set $C$ was arbitrary, this also holds for the supremum.
\end{proof}

\begin{remark}\rm
The proof of the lower bound did not require that $L$ is an integration lattice and therefore works for any $N$-point set arising from intersecting a $d$-dimensional lattice with $[0,1)^d$. 
\end{remark}

The lower bound in Theorem~\ref{pr1} together with the following proposition shows that the linear structure of lattice point sets, although of advantage for analytical and practical reasons, forces a large isotropic discrepancy.

\begin{proposition}\label{pr2}
Let $\cP(L)$ be an $N$-element lattice point set in $[0,1)^d$. Then we have $$\sigma(L) \ge \frac{\sqrt{\pi}}{2} \left(\Gamma\left(\frac{d}{2}+1\right)\right)^{-1/d} \frac{1}{N^{1/d}}.$$ 
\end{proposition}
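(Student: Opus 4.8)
The plan is to rephrase the assertion as an upper bound on the length of a shortest nonzero vector of the dual lattice and then apply Minkowski's first theorem. By the very definition of the spectral test we have $\sigma(L)=1/\lambda_1(L^{\bot})$, where $\lambda_1(L^{\bot}):=\min\{\|\bsh\|_2 \, : \, \bsh\in L^{\bot}\setminus\{\bszero\}\}$ denotes the Euclidean length of a shortest nonzero vector of $L^{\bot}$. Hence the claimed inequality is equivalent to
$$\lambda_1(L^{\bot})\le \frac{2}{\sqrt{\pi}}\left(\Gamma\left(\frac{d}{2}+1\right)\right)^{1/d} N^{1/d}.$$

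First I would pin down the covolume of $L^{\bot}$. Since $L$ is an integration lattice containing $\ZZ^d$ and $\cP(L)$ is a complete set of coset representatives of $L/\ZZ^d$, the index $[L:\ZZ^d]$ equals $N$, so a fundamental cell of $L$ has volume $\det(L)=1/N$ (this fact is already used in the proof of Theorem~\ref{pr1}). Consequently $\det(L^{\bot})=1/\det(L)=N$.

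The core of the argument is Minkowski's convex body theorem applied to the Euclidean ball $B_r=\{\bsx\in\RR^d:\|\bsx\|_2\le r\}$, whose volume is $\mathrm{vol}(B_r)=V_d\, r^d$ with $V_d=\pi^{d/2}/\Gamma\left(\frac{d}{2}+1\right)$ the volume of the unit ball in $\RR^d$. If $V_d\, r^d \ge 2^d\det(L^{\bot})$, then $B_r$ contains a nonzero point of $L^{\bot}$ (the boundary case being covered by the sharp version of Minkowski's theorem for closed symmetric convex bodies, or by applying the strict form with $r+\varepsilon$ for every $\varepsilon>0$ and passing to the limit using discreteness of $L^{\bot}$). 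Choosing $r$ with $V_d\, r^d = 2^d\det(L^{\bot})=2^d N$ gives $r=2\left(N/V_d\right)^{1/d}=\frac{2}{\sqrt{\pi}}\left(\Gamma\left(\frac{d}{2}+1\right)\right)^{1/d}N^{1/d}$, whence $\lambda_1(L^{\bot})\le r$. Taking reciprocals yields exactly the stated lower bound on $\sigma(L)$.

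I do not anticipate a genuine obstacle: the estimate is a direct specialization of Minkowski's first theorem to Euclidean balls, and the only points deserving a careful word are the duality identity $\det(L^{\bot})=1/\det(L)$ (standard for dual lattices) and the treatment of the equality case in Minkowski's theorem, which is routine.
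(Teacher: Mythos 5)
Your proposal is correct and follows essentially the same route as the paper: identify $\sigma(L)^{-1}$ with the shortest nonzero vector of $L^{\bot}$, use $\det(L^{\bot})=N$, and apply Minkowski's convex body theorem to a Euclidean ball of the critical radius before taking reciprocals. Your extra care with the boundary case of Minkowski's theorem (via $r+\varepsilon$ and discreteness) and the derivation of $\det(L^{\bot})=N$ from the index $[L:\ZZ^d]=N$ are fine touches, but they do not change the argument, which matches the paper's proof.
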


The proof of Proposition~\ref{pr2} uses a standard argument in the theory of lattices that is based on Minkowski's fundamental theorem, which states the following: {\it 
Let $L$ be a lattice in $\RR^d$. Then any convex set in
$\RR^d$ which is symmetric with respect to the origin and with
volume greater than $2^d \det(L)$ contains a non-zero lattice point of $L$} (see, e.g.,  \cite[Theorem~447]{HardWr}). Here, $\det(L)$ is the so-called {\it determinant} of a lattice $L$ which is geometrically interpreted the volume of a unit cell (see, e.g.,  \cite{niesiam,SK}).

For the sake of completeness we give the short proof of Proposition~\ref{pr2}. 

\begin{proof}[Proof of Proposition~\ref{pr2}]
Let $L$ be the integration lattice yielding the $N$-element lattice point set and let $L^{\bot}$ be the corresponding dual lattice. According to \cite[Sec.~3 and 4]{SK} (see also \cite[Theorem~5.30]{niesiam}) we have $\det(L^{\bot})=N$. Now consider the centered $\ell_2$-ball $$C_r^d:=\{\bsx \in \RR^d \, : \, x_1^2+\cdots+x_d^2 \le r^2\}$$ of radius $r>0$. Then $C_r^d$ is symmetric
with respect to the origin and the volume of
$C_r^d$ is $${\rm Vol}(C_r^d)=r^d \frac{\pi^{d/2}}{\Gamma(\frac{d}{2}+1)}.$$ Hence, by
Minkowski's theorem applied to $L^\bot$, we have that if
$$r^d \frac{\pi^{d/2}}{\Gamma(\frac{d}{2}+1)} \ge 2^d \det(L^\bot) =2^d N,$$ i.e., if
$r \ge \frac{2}{\sqrt{\pi}} (\Gamma(\frac{d}{2}+1))^{1/d}  N^{1/d}=:\widetilde{r}(d,N)$, then $C_r^d$ contains a non-zero point from $L^\bot$. In other words, $L^\bot$ contains a non-zero lattice point which
belongs to $C_{\widetilde{r}(d,N)}^d$ and therefore we have
$(\sigma(L))^{-1} \le \widetilde{r}(d,N)$. Taking the reciprocal values, we obtain the desired result. 
\end{proof}
 
Combining the lower bound from Theorem~\ref{pr1} with Proposition~\ref{pr2}, we obtain the result of Theorem~\ref{thm}.\\

\begin{proof}[Proof of Theorem~\ref{thm}]
	If $\sigma(L)>1/2$ we obtain from Theorem~\ref{pr1} together with the fact that $x\mapsto x/(\sqrt{d}+x)$ is increasing for $x\ge 1/2$ that 
	\[
J_N(\mathcal{P}(L))
\ge \frac{\sigma(L)}{\sqrt{d}+\sigma(L)}
\ge \frac{1}{2\sqrt{d}+1}.
	\]
	If $\sigma(L)\le 1/2$, then it suffices to combine Theorem~\ref{pr1} with Proposition~\ref{pr2}. 
\end{proof}

It can be shown that the order of magnitude $N^{-1/d}$ is best possible for the spectral test (or for the isotropic discrepancy) of integration lattices in dimension $d$. This bound can be even achieved for rank-1 lattice point sets.

\begin{proposition}
For every dimension $d$ there exists a positive number $C_d$ depending only on $d$ with the following property: for every prime number $N$ there exists a lattice point $\bsg \in \{0,1,\ldots,N-1\}^d$ such that the integration lattice corresponding to the rank-1 lattice point set $\cP(\bsg,N)=\{\{(n/N)\bsg\} \ : \ n=0,\ldots,N-1\}$ has a spectral test of at most $C_d N^{-1/d}$.  
\end{proposition}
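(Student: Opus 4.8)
The plan is to reformulate the claim as a counting statement about the generating vector $\bsg$ and then to verify it by a union bound. The integration lattice underlying the rank-1 lattice point set $\cP(\bsg,N)$ is $L=L(\bsg,N)=\frac1N\{k\bsg:k\in\ZZ\}+\ZZ^d$, and testing the definition of the dual lattice on $\bsx=\tfrac1N\bsg\in L$ and on $\bsx\in\ZZ^d\subset L$ shows that
$$L^\bot=\{\bsh\in\ZZ^d:\bsh\cdot\bsg\equiv0\pmod N\}.$$
Hence, with $\rho:=C_d^{-1}N^{1/d}$, the bound $\sigma(L(\bsg,N))\le C_dN^{-1/d}$ is equivalent to the statement that no nonzero $\bsh\in\ZZ^d$ with $\|\bsh\|_2<\rho$ satisfies $\bsh\cdot\bsg\equiv0\pmod N$. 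Call $\bsg\in\{0,1,\dots,N-1\}^d$ \emph{bad} if some such $\bsh$ exists; it then suffices to show that the number of bad $\bsg$ is less than $N^d$, since then a non-bad (and, for $N$ large, necessarily nonzero) $\bsg$ exists.

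To count the bad vectors, note first that if $C_d\ge1$ then $\rho\le N^{1/d}\le N$, so any nonzero $\bsh$ with $\|\bsh\|_2<\rho$ has all coordinates of modulus $<N$ and is in particular not $\equiv\bszero$ modulo $N$. Since $N$ is prime, $\ZZ/N\ZZ$ is a field, so for each such $\bsh$ the map $\bsg\mapsto\bsh\cdot\bsg\bmod N$ is a nonzero — hence surjective — linear functional on $(\ZZ/N\ZZ)^d$, and its kernel contains exactly $N^{d-1}$ residue classes. Thus every admissible $\bsh$ accounts for exactly $N^{d-1}$ bad $\bsg$, and the union bound gives that the number of bad $\bsg$ is at most $N^{d-1}$ times the number of nonzero integer points in the open Euclidean ball of radius $\rho$. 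Packing pairwise disjoint unit cubes centred at these integer points into the ball of radius $\rho+\tfrac12\sqrt d$ bounds the latter count by $V_d\,(\rho+\tfrac12\sqrt d)^d$ with $V_d=\pi^{d/2}/\Gamma(\tfrac d2+1)$. Hence a non-bad $\bsg$ exists as soon as $V_d\,(\rho+\tfrac12\sqrt d)^d<N$.

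It remains to choose the constant and to absorb small $N$. Fix $C_d>2V_d^{1/d}=2\bigl(\pi^{d/2}/\Gamma(\tfrac d2+1)\bigr)^{1/d}$, say $C_d=2V_d^{1/d}+1$. For every $N$ with $\rho=C_d^{-1}N^{1/d}\ge\tfrac12\sqrt d$, i.e.\ for $N\ge N_0(d):=\lceil(\tfrac12C_d\sqrt d)^d\rceil$, we have $\rho+\tfrac12\sqrt d\le2\rho$, whence $V_d(\rho+\tfrac12\sqrt d)^d\le 2^dV_dC_d^{-d}N<N$ and a non-bad $\bsg$ exists with $\sigma(L(\bsg,N))\le C_dN^{-1/d}$. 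For the finitely many primes $N<N_0(d)$ the assertion is trivial: for any $\bsg$ the dual lattice $L^\bot$ has full rank (it contains $N\ZZ^d$), so $\sigma(L(\bsg,N))$ is a finite positive number and $s(d,N):=\min_{\bsg}\sigma(L(\bsg,N))<\infty$; replacing $C_d$ by the (still $d$-only) constant $\max\{C_d,\max_{p<N_0(d)\ \text{prime}}p^{1/d}s(d,p)\}$ makes the bound hold for every prime $N$.

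The whole argument is routine once the dual-lattice reformulation is made; the only step that genuinely uses the hypothesis is the guarantee that a short nonzero $\bsh$ reduces to a \emph{nonzero} functional over $\ZZ/N\ZZ$, so that precisely a $1/N$-fraction of the $\bsg$ annihilate it. For composite $N$ this fraction could be as large as $1/p$ with $p$ the least prime factor of $N$, and then the union bound would no longer beat $N^d$ for $\rho$ of order $N^{1/d}$; this is why primality is assumed. The same proof in fact works for any moduli whose least prime factor tends to infinity, but this is not needed here.
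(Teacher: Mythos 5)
Your argument is correct. Note, though, that the paper itself does not spell out a proof: it simply invokes the counting argument from the proof of Lemma~2 in \cite{DLPW}, which establishes the existence of a generating vector $\bsg$ whose dual lattice has no short nonzero vectors with respect to the $\ell_1$-norm, and then passes to the $\ell_2$-norm via norm equivalence. What you have done is reprove that existence result from scratch, directly for the Euclidean norm: identify $L^{\bot}=\{\bsh\in\ZZ^d : \bsh\cdot\bsg\equiv 0 \ (\mathrm{mod}\ N)\}$, observe that for prime $N$ each nonzero candidate $\bsh$ with $\Vert\bsh\Vert_2 < \rho \le N$ annihilates exactly $N^{d-1}$ of the $N^d$ vectors $\bsg$, and beat $N^d$ by a union bound using the standard lattice-point count $V_d(\rho+\tfrac12\sqrt{d})^d$ for the ball. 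This is the same mechanism that underlies the cited lemma (averaging over $\bsg$ modulo a prime, with primality guaranteeing the exact $1/N$-fraction), so conceptually the routes coincide; what your version buys is self-containedness, an explicit admissible constant $C_d\approx 2\bigl(\pi^{d/2}/\Gamma(\tfrac d2+1)\bigr)^{1/d}$ (of order $\sqrt{d}$, matching the volume bound in Proposition~\ref{pr2} up to a constant factor), no detour through the $\ell_1$-norm and its $\sqrt{d}$-loss, and an explicit treatment of the finitely many small primes, which the citation-based proof leaves implicit. All individual steps check out, including the reduction ``short nonzero $\bsh$ is nonzero mod $N$'' and the enlargement of $C_d$ for $N<N_0(d)$ using that $\sigma(L(\bsg,N))\le 1$ is always finite.
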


\begin{proof}
The result follows from \cite[Proof of Lemma~2]{DLPW} together with the fact that the $\ell_2$- and the $\ell_1$-norm in $\RR^d$ are equivalent. 
\end{proof}

Also, an appropriately scaled version of $\ZZ^d$ is asymptotically optimal in terms of the spectral test and probably also the isotropic discrepancy. To see this, take the sequence $(M^d)_{M\in\NN}$ and consider the scaled integer lattice $(1/M)\ZZ^d$. It has $N=M^d$ points inside the unit cube $[0,1)^d$ and its dual lattice is $M\ZZ^d$. Therefore, the length of the shortest non-zero vector in its dual lattice is $M=N^{1/d}$ and the spectral test of the corresponding lattice point set is equal to $\sigma(\cP((1/M)\ZZ^d))=M^{-1}=N^{-1/d}$.\\

Just as easily one can construct sequences of bad lattice point sets in $[0,1)^d$ which concentrate on few hyperplanes. For example, let $N=2M^{d-1}$ for some $M\in \NN$. Take a lattice $L$ with generating vectors $\bsb_1,...,\bsb_d$, where $\bsb_i=(1/M)\bse_i$ for $1\leq i \leq d-1$ and $\bsb_d=(1/2) \bse_d$. Here, $\bse_i$ is the $i^{\rm{th}}$ unit vector. For any $M\in\NN$, the corresponding lattice point set contains $N$ points which lie on two hyperplanes of distance $\sigma(L)=1/2$ that are parallel to the span of $\bse_1,\ldots,\bse_{d-1}$. Hence, the box $(0,1)^{d-1} \times (0,1/2)$ is always empty and thus the isotropic discrepancy is at least $1/2$.

\paragraph{Acknowledgement.} The authors thank Mario Ullrich for valuable discussions.

\begin{small}
\noindent\textbf{Authors' addresses:}\\
\noindent  Friedrich Pillichshammer, Institut f\"{u}r Finanzmathematik und Angewandte Zahlentheorie, Johannes Kepler Universit\"{a}t Linz, Altenbergerstr.~69, 4040 Linz, Austria.\\
\textbf{E-mail:} \texttt{friedrich.pillichshammer@jku.at}\\

\noindent Mathias Sonnleitner, Institut f\"ur Analysis, Johannes Kepler Universit\"{a}t Linz, Altenbergerstr.~69, 4040 Linz, Austria.\\
\textbf{E-mail:} \texttt{math.s@posteo.net}\\
\end{small}

\end{document}